\newtheorem{Thm}{Theorem}[section]
\newtheorem{Le}[Thm]{Lemma}
\newtheorem{Prop}[Thm]{Proposition}
\newtheorem{Cor}[Thm]{Corollary}
\theoremstyle{definition}
\newtheorem{Def}[Thm]{Definition}
\theoremstyle{remark}
\numberwithin{equation}{section}
\renewcommand{\d}{\mathrm{d}}
\renewcommand{\b}{\beta}
\renewcommand{\a}{\alpha}
\def\and{\quad \text{and} \quad}
\newcommand{\g}{\mathfrak{g}}
\newcommand{\h}{\mathfrak{h}}
\renewcommand{\P}{\mathcal{P}}
\newcommand{\Q}{\mathcal{Q}}
\newcommand{\R}{\mathcal{R}}
\newcommand{\C}{\mathcal{C}}
\newcommand{\B}{\mathfrak{B}}
\newcommand{\m}{\mathrm{m}}
\newcommand{\so}{\mathfrak{so}}
\newcommand{\gl}{\mathfrak{gl}}
\begin{document}

\begin{flushright}
ITP--UH--18/14
\end{flushright}

\vspace{0.5cm}
\title{A method of deforming G-structures}
\keywords{G-structures; instantons in higher dimensions; connections on principal bundles; flux compactifications}

\author{Severin Bunk}
\address{Institut für Theoretische Physik, Leibniz Universität Hannover}
\curraddr{Department of Mathematics, Heriot-Watt University
Colin Maclaurin Building, Riccarton, Edinburgh EH14 4AS, U.K.
}

\email{sb11@hw.ac.uk}

\begin{abstract}
We consider deformations of $G$-structures via the right action on the frame bundle in a base-point-dependent manner.
We investigate which of these deformations again lead to $G$-structures and in which cases the original and the deformed $G$-structures define the same instantons.
Further, we construct a bijection from connections compatible with the original $G$-structure to those compatible with the deformed $G$-structure and investigate the change of intrinsic torsion under the aforementioned deformations.
Finally, we consider several examples.
\end{abstract}

\maketitle

\vspace{-1cm}

\section{Introduction}

In recent years, $G$-structures have become a subject of intense research for several reasons.
They provide the foundation of the classification of special geometries, as for instance Calabi-Yau, nearly Kähler and half-flat spaces.
Interest in Riemannian manifolds of special holonomy has been increasing since the publication of Berger's famous list of Riemannian holonomy groups~\cite{Berger55}.
Such manifolds can be seen as manifolds endowed with a $G$-structure which is compatible with the Levi-Civita connection, i.e. whose intrinsic torsion with respect to the Levi-Civita connection vanishes.
Sometimes such $G$-structures are called integrable.

With the success of the utilisation of instantons in the classification of $4$-dimensional manifolds~\cite{Donaldson83}, the relevance of the investigation of instantons in more general situations, as initiated in~\cite{DonaldsonThomas98,DonaldsonSegal11}, has been realised.
The definition of instantons in higher dimensions crucially relies on the notion of $G$-structures.
Moreover, $G$-structures and instantons are still becoming increasingly prominent among physicists due to their natural occurrence in string theory (cf.~\cite{Strominger:1986,HarlandNoelle,Grana,Ivanov,Gemmer:2012pp,Haupt:2014ufa,Cardosoea,Gauntlett:2002,Gauntlett:2003} and references therein).

Regarding any of these aspects, it is desirable to consider explicit examples of manifolds with $G$-structures.
In this note, we present a general scheme for the construction of certain families of $G$-structures from a given $G$-structure.
We investigate the space of connections compatible with these structures as well as the change of intrinsic torsion, briefly consider the induced instanton conditions and finish by introducing some examples.

\section{Preliminaries}
First, we focus on the geometry of principal bundles and principal subbundles.
Notions and notation used here can be found for example in~\cite{Baum,KobNomI63}.

Let $(\P,\pi,M,H)$ be a principal bundle with total space $\P$, structure group $H$ and projection $\pi$ over a $D$-dimensional base manifold $M$.
We denote the right-action of $H$ on $\P$ by
\begin{equation}
 R: \P {\times} H \rightarrow \P,\ (p,h) \mapsto R_h\, p.
\end{equation}
Representations $\rho: H \rightarrow GL(V)$ of $H$ on a (finite-dimensional) vector space $V$ give rise to associated vector bundles
\begin{equation}
 E = \P \times_{(H,\rho)} V = (\P {\times} V)/{\sim}\,,
\end{equation}
which consist of the equivalence classes
\begin{equation}
 \big[ p,v \big] = \big[ R_h\, p,\, \rho(h^{-1})(v) \big] \quad \forall\, h \in H.
\end{equation}
We call a $k$-form on $\P$ \emph{horizontal} if its evaluation on tangent vectors vanishes as soon as any of the vectors is vertical.
A $V$-valued $k$-form $\omega$ on $\P$ is said to be \emph{of type $\rho$} if ${R_h}^*\, \omega = \rho(h^{-1}) \circ \omega$.
We denote the space of horizontal $k$-forms of type $\rho$ with values in $V$ by $\Omega^k_{hor}(\P,V)^{(H,\rho)}$.

Consider a principal $G$-subbundle $(\Q,\pi,M,G)$ of $\P$.
That is, $\Q \subset \P$ is a submanifold of $\P$, and the $G$-action on $\Q$ coincides with the restriction of the right-action of $H$ on $\P$ to $G \subset H$ and $\Q \subset \P$.
In this case, every $[p,v] \in E$ can be written in the form $[q,v']$ for some $q \in \Q$ due to the transitivity of the $H$-action on the fibres of $\P$.
Thus,
\begin{equation}
 E = \P \times_{(H,\rho)} V = \Q \times_{(G,\rho)} V,
\end{equation}
where the representation of $G$ on $V$ is the restriction of $\rho$ to $G$.\\
If $G \subset H$ is given as the stabiliser of an element $\tau_0 \in V$, one can prove the following lemma utilised e.g. in~\cite{ContiSalamon}:

\begin{Le}
\label{st:G-structures_and_defining_sections}
 Let $G \subset H$ be the stabiliser of $\tau_0 \in V$ in the representation $\rho$ of $H$ on $V$.
 There exists a one-to-one correspondence between principal $G$-subbundles $\Q \subset \P$ and global sections $\tau \in \Gamma(E)$ with the following property:
 One can find an open covering $\{(U_i,s_i)\}_{i \in \Lambda}$ of $M$ by local sections of $\P$ such that for every $i \in \Lambda$
 \begin{equation}
 \label{eq:def_sec:local_rep}
  \tau_x = [s_i(x),\, \tau_0] \quad \forall\, x \in U_i.
 \end{equation}
\end{Le}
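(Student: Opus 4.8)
The plan is to exhibit explicit maps in both directions and check that they are mutually inverse.

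\textbf{From subbundles to sections.} Given a principal $G$-subbundle $\Q \subset \P$, I would define $\tau \in \Gamma(E)$ by $\tau_x := [q,\tau_0]$ for any $q$ in the fibre $\Q_x = \Q \cap \pi^{-1}(x)$. First one checks this is well defined: two elements of $\Q_x$ differ by the right action of some $g \in G$, and since $\rho(g)\tau_0 = \tau_0$ we get $[R_g q,\tau_0] = [q,\rho(g)\tau_0] = [q,\tau_0]$. Smoothness and the required covering property then come for free, since $\Q$ admits local sections $s_i \colon U_i \to \Q \subset \P$, and with respect to these $\tau_x = [s_i(x),\tau_0]$, which is manifestly smooth and of the stated form.

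\textbf{From sections to subbundles.} Conversely, given a section $\tau$ with the stated property, I would set
\[
 \Q := \big\{ p \in \P \ :\ [p,\tau_0] = \tau_{\pi(p)} \big\},
\]
and verify three things in turn. (i) $\Q$ is $G$-invariant: for $p \in \Q$ and $g \in G$, $[R_g p,\tau_0] = [p,\rho(g)\tau_0] = [p,\tau_0] = \tau_{\pi(p)}$. (ii) Each fibre $\Q_x$ is a single free $G$-orbit: given $p,p' \in \Q_x$, transitivity of the $H$-action on $\pi^{-1}(x)$ yields a unique $h \in H$ with $p' = R_h p$, and then $[p,\rho(h)\tau_0] = [p',\tau_0] = \tau_x = [p,\tau_0]$; since $v \mapsto [p,v]$ is injective for fixed $p$, this forces $\rho(h)\tau_0 = \tau_0$, i.e. $h \in G$. (iii) $\Q$ is an embedded submanifold of $\P$ and a locally trivial principal $G$-bundle: here one uses exactly the covering $\{(U_i,s_i)\}$ supplied with $\tau$, because $\tau_x = [s_i(x),\tau_0]$ says precisely that $s_i(U_i) \subset \Q$; in the trivialisation $\P|_{U_i} \cong U_i \times H$ induced by $s_i$ (under which $s_i(x) \leftrightarrow (x,e)$), statement (ii) identifies $\Q|_{U_i}$ with $U_i \times G$, which is both an embedded submanifold and a principal $G$-bundle chart, with transition functions valued in $G$.

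\textbf{Mutual inverseness.} Finally I would check the two assignments undo each other. If $\tau$ is produced from $\Q$ and then $\Q'$ from $\tau$, the inclusion $\Q \subseteq \Q'$ is immediate; conversely any $p \in \Q'$ lies over some $x$, and choosing $q \in \Q_x$ and writing $p = R_h q$, the identity $[q,\tau_0] = \tau_x = [p,\tau_0] = [q,\rho(h)\tau_0]$ forces $h \in G$, so $p \in \Q$. In the other direction, if $\Q$ is built from $\tau$ and $\tau'$ is then built from $\Q$, then for $q \in \Q_x$ one has $[q,\tau_0] = \tau_x$ by definition of $\Q$, whence $\tau'_x = \tau_x$. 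The only genuinely non-formal step is (iii): upgrading the pointwise/algebraic description of $\Q$ to a smooth embedded principal subbundle. This is precisely where the extra hypothesis on $\tau$ is needed — a bare section of $E$ need not arise as $[s_i(\cdot),\tau_0]$ for local sections $s_i$ of $\P$, and without such local lifts the set $\Q$ carries no reason to be a submanifold. Everything else reduces to the two facts that $v \mapsto [p,v]$ is injective and that $G$ is the stabiliser of $\tau_0$.
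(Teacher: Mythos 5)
The paper does not actually prove this lemma --- it only states it and refers to the literature (``one can prove the following lemma utilised e.g.\ in \cite{ContiSalamon}''), so there is no in-paper argument to compare against. Your proof is correct and is the standard one: the two constructions $\Q \mapsto [\,\cdot\,,\tau_0]$ and $\tau \mapsto \{p : [p,\tau_0]=\tau_{\pi(p)}\}$ are exhibited and checked to be mutually inverse, with the key inputs being the freeness of the $H$-action (giving injectivity of $v \mapsto [p,v]$) and the identification of $G$ as the full stabiliser of $\tau_0$. You also correctly isolate the only non-formal point, namely step (iii), and correctly identify the covering hypothesis on $\tau$ as exactly what makes it work: in the trivialisation over $U_i$ induced by $s_i$, the set $\Q\vert_{U_i}$ becomes $U_i \times G$, and the cocycle $g_{ij}$ relating $s_i$ and $s_j$ is forced into $G$ by your step (ii). The one ingredient you use tacitly and could state explicitly is that $G$, being the stabiliser of a point under a smooth $H$-action, is automatically a closed embedded Lie subgroup of $H$; this is what makes $U_i \times G$ an embedded submanifold of $U_i \times H$ and ensures the $H$-valued transition maps are smooth as maps into $G$. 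With that remark added, the argument is complete.
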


Principal $G$-subbundles of $\P$ are more commonly characterised by sections of the associated fibre bundle $\P \times_{(H,\ell)} (H/G)$, where $\ell$ is the natural left-action of $H$ on $H/G$.
However, the characterisation introduced in Lemma~\ref{st:G-structures_and_defining_sections} is more convenient in the context of this paper.
We will refer to a section $\tau$ as in Lemma~\ref{st:G-structures_and_defining_sections} as a \emph{defining section} for $\Q$.

We denote the space of connections on $\P$ by $\C(\P)$ and frequently identify a connection with its connection $1$-form.
From~\cite{Baum,KobNomI63} we take the following assertion:

\begin{Le}
\label{st:fhat_fcheck}
 For $\P$ and $\Q$ as above, the following statements hold true:
 \begin{enumerate}
  \item Every connection $A \in \C(\Q)$ extends to a unique connection on $\P$.
   This yields an embedding
   \begin{equation}
    \widehat{f}_\Q: \C(\Q) \hookrightarrow \C(\P).
   \end{equation}
  \item Assume, that there exists a $G$-invariant splitting $\h = \g \oplus \m$, where $\h$ and $\g$ are the Lie algebras of $H$ and $G$, respectively, and where $G$ acts via the restriction of the adjoint representation of $H$.
  Then, there exists a map
   \begin{equation}
    \widecheck{f}_\Q: \C(\P) \rightarrow \C(\Q),\quad A \mapsto pr_\g \circ A_{\vert \Q}.
   \end{equation}
 \end{enumerate}
\end{Le}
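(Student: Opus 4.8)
For the first statement, the plan is to construct the extension at the level of horizontal distributions and then propagate it over $\P$ by equivariance. Write $\mathcal{H}^\Q \subset T\Q$ for the horizontal distribution of $A$, and note that every $p \in \P$ can be written $p = R_h q$ with $q \in \Q$ and $h \in H$, since $\Q$ and $\P$ share the base $M$ and $H$ acts transitively on the fibres of $\P$. At $q \in \Q$ I would set $\mathcal{H}^\P_q := \mathcal{H}^\Q_q$; this is legitimate because the defining property of a connection makes $\d\pi_q$ restrict to an isomorphism from $\mathcal{H}^\Q_q$ onto $T_{\pi(q)}M$, so that $T_q\P = \mathcal{H}^\Q_q \oplus \ker \d\pi_q = \mathcal{H}^\Q_q \oplus V^\P_q$. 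I then extend by $\mathcal{H}^\P_{R_h q} := (R_h)_* \mathcal{H}^\Q_q$. The main point --- and the step I expect to be the principal obstacle --- is well-definedness of this formula: if $R_h q = R_{h'} q'$ with $q, q' \in \Q$, then $q' = R_a q$ for a unique $a \in G$ (a fibre of $\Q$ being a single $G$-orbit), hence $h = a h'$ by freeness of the $H$-action, and $G$-invariance of $\mathcal{H}^\Q$ gives
\[
 (R_h)_* \mathcal{H}^\Q_q = (R_{h'})_* (R_a)_* \mathcal{H}^\Q_q = (R_{h'})_* \mathcal{H}^\Q_{R_a q} = (R_{h'})_* \mathcal{H}^\Q_{q'} .
\]
Smoothness and $H$-invariance of $\mathcal{H}^\P$ are then read off from a local trivialisation of $\P$ adapted to $\Q$, so $\mathcal{H}^\P$ is a connection; I denote its connection $1$-form by $\widehat{f}_\Q(A)$.

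Next I would check that $\widehat{f}_\Q(A)$ is the \emph{unique} connection on $\P$ restricting to $A$ on $\Q$. By construction $\mathcal{H}^\P \cap T\Q = \ker A$, so the pullback of $\widehat{f}_\Q(A)$ to $\Q$ annihilates $\mathcal{H}^\Q$ and sends the fundamental vector field of $\xi \in \g$ to $\xi$; hence this pullback equals $A$ as a $\g$-valued (in particular $\h$-valued) form. Conversely, if $A' \in \C(\P)$ restricts to $A$ on $\Q$, then its horizontal distribution meets $T_q\Q$ exactly in $\ker A_q = \mathcal{H}^\Q_q$, and since both are $D$-dimensional this forces $\mathcal{H}^{A'}_q = \mathcal{H}^\Q_q$; $H$-equivariance then propagates this identity over all of $\P$, so $A' = \widehat{f}_\Q(A)$. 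Injectivity of $\widehat{f}_\Q$ is immediate because $A$ is recovered by restriction, and affineness is routine (the difference of two extensions is the unique horizontal equivariant $\h$-valued form on $\P$ restricting to the difference of the originals), so $\widehat{f}_\Q$ is an embedding.

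For the second statement I would simply put $\widecheck{f}_\Q(A) := pr_\g \circ A_{\vert \Q}$ for $A \in \C(\P)$ and verify the two defining properties of a connection $1$-form on $\Q$. Reproduction of fundamental fields is immediate: for $\xi \in \g$ the fundamental field of $\xi$ on $\Q$ is the restriction of that on $\P$, so $\widecheck{f}_\Q(A)$ sends it to $pr_\g(\xi) = \xi$. For equivariance, let $g \in G$; using that the $G$-action on $\Q$ is the restriction of the $H$-action and that $A$ is equivariant on $\P$,
\[
 R_g^*\, \widecheck{f}_\Q(A) = pr_\g \circ \big( \mathrm{Ad}(g^{-1}) \circ A \big)_{\vert \Q} = pr_\g \circ \mathrm{Ad}(g^{-1}) \circ A_{\vert \Q} .
\]
Only here does the hypothesis enter: $G$-invariance of the splitting $\h = \g \oplus \m$ means $\mathrm{Ad}(g)$ preserves both summands for $g \in G$, hence commutes with the projection $pr_\g$, so the right-hand side equals $\mathrm{Ad}(g^{-1}) \circ \widecheck{f}_\Q(A)$, as required; smoothness being clear, $\widecheck{f}_\Q(A) \in \C(\Q)$. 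Thus the only genuine work is in part (1), in the equivariant-extension construction and its well-definedness; part (2) is formal once one observes that $G$-invariance of $\h = \g \oplus \m$ is exactly the compatibility making $pr_\g$ intertwine the adjoint actions.
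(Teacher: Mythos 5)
The paper offers no proof of this lemma---it is quoted directly from the cited references (Baum; Kobayashi--Nomizu)---and your argument is precisely the standard one found there: extend the horizontal distribution by $\mathcal{H}^\P_{R_h q} := (R_h)_*\mathcal{H}^\Q_q$ and project via $pr_\g$ for the two parts respectively. Your treatment is correct, and it handles the two points that genuinely require care: well-definedness of the extension (via $h = a h'$ and $G$-invariance of $\mathcal{H}^\Q$) together with the uniqueness argument by dimension count, and the observation that the $G$-invariance of the splitting $\h = \g \oplus \m$ is used exactly to make $pr_\g$ commute with $Ad(g)$ in the equivariance check.
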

Note that the first map is injective, while the second is surjective.

\section{Normal deformations}
\label{sect:ndefs}

\subsection{Principal subbundles}
Manifolds often admit more than one additional geometric structure, and often some of these structures may be obtained as deformations of others.
For example, a Riemannian manifold $(M,g)$ does not only admit the Riemannian metric $g$, but also all the Riemannian metrics $\phi\, g$, where $\phi \in C^\infty(M,\mathbb{R}_+)$ is a positive function on $M$.

Let $\Q$ be a principal $G$-subbundle of a principal $H$-bundle $\P$ as in the previous section.
As this is the general framework behind $G$-structures, we are tempted to ask for general ways to deform $\Q$ such that we again arrive at principal $G$-subbundles $\Q'$ of $\P$.

One possibility to obtain candidates for such subbundles is to deform the submanifold $\Q \subset \P$ using the right-action of $H$ on $\P$.
To this end, consider a map $h \in C^\infty(M,H)$, i.e.~$h: M \rightarrow H,\, x \mapsto h(x)$.
Every such map induces a diffeomorphism
\begin{equation}
 \R_h:\, \P \rightarrow \P, \quad p \mapsto R_{h(\pi(p))}\, p.
\end{equation}
Note that in general this is not an automorphism of $\P$ as a principal bundle, since $\R$ does not commute with the right $H$-action.
Rather we have
\begin{equation}
 \R_h \circ R_a = R_{\alpha(h^{-1})(a)} \circ \R_h,
\end{equation}
where $\a(a_1)(a_2) \coloneq a_1\, a_2\, a_1^{-1}$ denotes the inner automorphism of $H$.
Defining $(h_1\, h_2)(x) = h_1(x)\, h_2(x)$ endows $C^\infty(M,H)$ with a group structure, and $h \mapsto \mathcal{R}_h$ provides a right-action of $C^\infty(M,H)$ on $\P$.

As $\mathcal{R}_h$ is a diffeomorphism, the image of a submanifold of $\P$ under this map is, again, a submanifold of $\P$.
Therefore, $\Q' \coloneq \mathcal{R}_h\, \Q$ is a natural candidate for a new principal $G$-subbundle of $\P$.
As $\Q'$, hence, is a submanifold of $\P$ by construction, we have to ensure that $\Q'$ is a principal $G$-bundle and, additionally, a principal subbundle of $\P$.
The result is the following statement:

\begin{Thm}
\label{st:rotchar}
 Let $\Q \subset \P$ be a principal $G$-subbundle of $(\P,\pi,M,H)$, and consider a map $h \in C^\infty(M,H)$.
 Then the following statements are equivalent:
 \begin{enumerate}
  \item $\Q' = \mathcal{R}_h\, \Q$ is a  principal $G$-subbundle of $\P$.
  \item $h$ takes values in $N_H(G)$ only, where
   \begin{equation}
    N_H(G) \coloneq \big\{ a \in H\, \big\vert\, a\, g\, a^{-1} \in G\ \, \forall\, g \in G \big\}
   \end{equation}
   is the normaliser of $G$ in $H$.
  \item If $\Q$ has a defining section $\tau \in \Gamma(E)$ in a vector bundle $E$ associated to $\P$ as above, the prescription
    \begin{equation}
     \tau'_{\pi(q)} \coloneq \big[ R_{h(\pi(q))}\, q,\, \tau_0 \big]
    \end{equation}
   yields a defining section for $\Q'$.
 \end{enumerate}
\end{Thm}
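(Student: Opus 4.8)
The plan is to establish the equivalences by proving a cycle of implications, say $(1)\Rightarrow(2)\Rightarrow(3)\Rightarrow(1)$, with the implication $(2)\Rightarrow(3)$ carrying most of the technical weight. The conceptual heart of the matter is the interplay between the twisted equivariance relation $\mathcal{R}_h\circ R_a=R_{\alpha(h^{-1})(a)}\circ\mathcal{R}_h$ (stated in the excerpt) and the requirement that $\mathcal{Q}'=\mathcal{R}_h\,\mathcal{Q}$ be stable under the \emph{restricted} $G$-action, not merely under some conjugated action.

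For $(1)\Rightarrow(2)$: First I would observe that since $\mathcal{R}_h$ is a fibrewise diffeomorphism, $\mathcal{Q}'$ is automatically a submanifold with $\pi(\mathcal{Q}')=M$ and fibres $\mathcal{Q}'_x=R_{h(x)}\,\mathcal{Q}_x$. Pick a point $q\in\mathcal{Q}_x$; then $\mathcal{R}_h\,q=R_{h(x)}q\in\mathcal{Q}'_x$. If $\mathcal{Q}'$ is a $G$-subbundle, then for every $g\in G$ the point $R_g\bigl(R_{h(x)}q\bigr)=R_{h(x)g}q$ lies in $\mathcal{Q}'_x=R_{h(x)}\mathcal{Q}_x$, hence $R_{h(x)g}q=R_{h(x)}q''$ for some $q''\in\mathcal{Q}_x$, which forces $R_{g'}q=q''$ with $g'=h(x)^{-1}g\,h(x)$... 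I need to be a little careful here: what I actually extract is that $R_{h(x)^{-1}g\,h(x)}q\in\mathcal{Q}_x$ for all $q\in\mathcal{Q}_x$ and all $g\in G$. Since $\mathcal{Q}_x$ is a single $G$-orbit and the $H$-action on the fibre is free, $R_b q\in\mathcal{Q}_x$ for $q\in\mathcal{Q}_x$ iff $b\in G$; therefore $h(x)^{-1}g\,h(x)\in G$ for all $g\in G$, i.e.\ $h(x)^{-1}\in N_H(G)$, equivalently $h(x)\in N_H(G)$ since the normaliser is a subgroup. This should be made for each $x$, giving $h\in C^\infty(M,N_H(G))$.

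For $(2)\Rightarrow(3)$: Here I would take $\tau\in\Gamma(E)$ a defining section for $\mathcal{Q}$ with local trivialising sections $\{(U_i,s_i)\}$ of $\mathcal{P}$ as in Lemma~\ref{st:G-structures_and_defining_sections}, so $\tau_x=[s_i(x),\tau_0]$ on $U_i$. The candidate $\tau'_x=[R_{h(x)}q,\tau_0]$ must first be shown \emph{well-defined}, i.e.\ independent of the choice of $q\in\mathcal{Q}_x$: replacing $q$ by $R_g q$ with $g\in G$ changes $R_{h(x)}q$ to $R_{h(x)g}q=R_{h(x)g h(x)^{-1}}\bigl(R_{h(x)}q\bigr)$, and since $h(x)\in N_H(G)$ we have $\tilde g:=h(x)g\,h(x)^{-1}\in G=\mathrm{Stab}(\tau_0)$, so $[R_{\tilde g}(R_{h(x)}q),\tau_0]=[R_{h(x)}q,\rho(\tilde g^{-1})\tau_0]=[R_{h(x)}q,\tau_0]$; this is exactly where the normaliser condition is used. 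Smoothness of $\tau'$ then follows by exhibiting a local representative: on $U_i$ set $s_i'(x):=R_{h(x)}s_i^{\mathcal{Q}}(x)$ where $s_i^{\mathcal{Q}}$ is a local section of $\mathcal{Q}$, so $s_i'=\mathcal{R}_h\circ s_i^{\mathcal{Q}}$ is a smooth local section of $\mathcal{P}$ and $\tau'_x=[s_i'(x),\tau_0]$. Finally, to see $\tau'$ is the defining section of $\mathcal{Q}'$, I note that $\{(U_i,s_i')\}$ are local sections of $\mathcal{P}$ taking values in $\mathcal{Q}'$ (since $s_i'=\mathcal{R}_h\circ s_i^{\mathcal{Q}}$ and $\mathcal{R}_h$ maps $\mathcal{Q}$ to $\mathcal{Q}'$), and the principal $G$-subbundle associated with $\tau'$ via Lemma~\ref{st:G-structures_and_defining_sections} is the one whose fibre over $x$ is $\{p:[p,\tau_0]=\tau'_x\}=\{p:\tau_0\text{ is fixed by the transition from }s_i'(x)\text{ to }p\}$, which unwinds to $R_{h(x)}\mathcal{Q}_x=\mathcal{Q}'_x$. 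The implication $(3)\Rightarrow(1)$ is then immediate: a section of the stated form is by Lemma~\ref{st:G-structures_and_defining_sections} a defining section of \emph{some} principal $G$-subbundle, and the computation just performed identifies that subbundle as $\mathcal{Q}'=\mathcal{R}_h\,\mathcal{Q}$, so in particular $\mathcal{Q}'$ is a principal $G$-subbundle.

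The main obstacle I anticipate is purely bookkeeping rather than conceptual: keeping the left/right placement of the conjugating element $h(x)$ straight throughout — in the twisted equivariance law, in the well-definedness check, and in the identification of fibres — so that "normaliser of $G$" comes out on the correct side. A secondary point worth stating carefully is the freeness/transitivity argument that $R_b q\in\mathcal{Q}_x\iff b\in G$ for $q\in\mathcal{Q}_x$, which is what converts the orbit condition into the algebraic normaliser condition in $(1)\Rightarrow(2)$; I would phrase this once cleanly and reuse it.
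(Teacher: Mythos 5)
Your core computations coincide with the paper's: the twisted equivariance $\R_h \circ R_g = R_{\a(h^{-1})(g)} \circ \R_h$ converts $G$-stability of $\Q'$ into a conjugation condition, and well-definedness of $\tau'_x$ under $q \mapsto R_g q$ is exactly where the normaliser enters. The substantive difference is structural, and it hides a gap. You close the equivalence as a cycle $(1)\Rightarrow(2)\Rightarrow(3)\Rightarrow(1)$, so the implication $(2)\Rightarrow(1)$ --- the direction that actually produces new $G$-structures --- is obtained only by passing through $(3)$, i.e.\ by invoking Lemma~\ref{st:G-structures_and_defining_sections}. But statement $(3)$ is conditional (``if $\Q$ has a defining section\ldots''); when $G$ is not realised as the stabiliser of some $\tau_0$ and no defining section exists, $(3)$ is vacuous, your cycle degenerates to the single implication $(1)\Rightarrow(2)$, and $(2)\Rightarrow(1)$ is not established. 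The paper proves $(2)\Rightarrow(1)$ directly: $\Q'$ is a submanifold and a fibre bundle with typical fibre $G$ because $\R_h$ is a diffeomorphism, the restricted $H$-action preserves $\Q'$ by the normaliser condition, and it is simply transitive on the fibres because $\a(h(x))$ is an automorphism of $G$. You should either supply this direct argument or state explicitly that you work under the standing hypothesis $G=\mathrm{Stab}(\tau_0)$.

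Secondly, the conjugation bookkeeping you flagged as a risk does go wrong. With $R_a p = p\,a$ one has $R_g(R_{h(x)}q) = q\,h(x)\,g = R_{h(x)}\bigl(R_{h(x)\,g\,h(x)^{-1}}\,q\bigr)$, so the element forced into $\Q_x$ is $R_{h(x)\,g\,h(x)^{-1}}\,q$, giving $h(x)\,g\,h(x)^{-1}\in G$, which is \emph{verbatim} the definition of $h(x)\in N_H(G)$ --- no inversion needed. Your version lands on $h(x)^{-1}g\,h(x)\in G$ and is patched by ``the normaliser is a subgroup''; note that for the one-sided definition $N_H(G)=\{a : a\,g\,a^{-1}\in G\ \forall g\}$ closure under inversion is not formally automatic (it requires $aGa^{-1}=G$ rather than mere containment, which holds e.g.\ for $G$ compact or connected but deserves a remark). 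The paper itself silently uses this equivalence in its $(2)\Leftrightarrow(3)$ step, so this is a shared blemish rather than a fatal one; still, done on the correct side the appeal is unnecessary in $(1)\Rightarrow(2)$. Likewise, in your well-definedness check the correct conjugating element is $\tilde g = h(x)^{-1}g\,h(x)$, not $h(x)\,g\,h(x)^{-1}$; this does not affect the conclusion but should be fixed.
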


\begin{proof}
 $(1) \Rightarrow (2)$:
  Assume that $\Q' = \mathcal{R}_h\, \Q \subset \P$ is a  principal $G$-subbundle of $\P$.
  In particular, $\Q'$ carries a right-action of $G$ given by the restriction of the right-action of $H$ on $\P$, and $\Q'$ is invariant under this right-action of $G$.
  That is,
  \begin{equation}
   R_g\, q' \in \Q' \quad \forall\, g \in G,\, q' \in \Q'.
  \end{equation}
  Now, for every $q' \in \Q'$ there is a unique $q \in \Q$ such that $q' = R_{h(x)}\, q$, where $x = \pi(q')$.
  Hence, we have
  \begin{equation}
   R_g\, q' = R_g\, R_{h(x)}\, q = R_{h(x)}\, R_{\a(h(x))(g)}\, q \in \Q'
  \end{equation}
  for all $q \in \Q$ and $g \in G$.
  
  However, since $\Q'$ is defined as the image of $\Q$ under the diffeomorphism $\mathcal{R}_h$, the statement that $R_g\, q' \in \Q'$ is true if and only if
  \begin{equation}
   R_{\a(h(x))(g)}\, q \in \Q.
  \end{equation}
  This is equivalent to
  \begin{equation}
   \a(h(x))(g) \in G \quad \forall\, x \in M,
  \end{equation}
  because $\Q$ is invariant under the right-action of $G$, and because the right-action of $G$ on $\Q$ is simply transitive on the fibres.
  In turn, this is equivalent to $h(x) \in N_H(G)$ for all $x \in M$.\medskip
  
 $(2) \Rightarrow (1)$:
  First, note that, as $\mathcal{R}_h$ is a diffeomorphism on $\P$, the set $\Q'$ endowed with the induced differentiable structure is, indeed, a submanifold of $\P$.
  Moreover, it is a fibre bundle over $M$ with typical fibre $G$.
  For this to be a principal $G$-subbundle of $\P$, the restriction of the right-action of $H$ on $\P$ has to be a right-action on $\Q'$ as well.
  As before, this right action is given by
  \begin{equation}
  \label{eq:right_action_Q'}
   R_g\, q' = R_{h(x)}\, R_{\a(h(x))(g)}\, q.
  \end{equation}
  This is an element of $\Q'$ since we assume statement $(2)$ of the proposition, and, moreover, \eqref{eq:right_action_Q'} defines a right-action of $G$ on $\Q'$.
  Furthermore, as the right-action of $G$ on $\Q$ is simply transitive on the fibres and $\a(h(x))$ is an automorphism of $G$ for every $x \in M$, the above $G$-action on $\Q'$ is simply transitive on the fibres of $\Q'$.
  Hence, $\Q'$ is a principal $G$-subbundle of $\P$.\medskip
  
 $(2) \Leftrightarrow (3)$:
  In the case of $(2)$ we have
  \begin{equation}
   \tau'_{\pi(q)} \coloneq [R_{h(x)}\, q,\, \tau_0] = [q,\, \rho(h(x))(\tau_0)].
  \end{equation}
  This is a well-defined element of the fibre of $E$ at $\pi(q)$ if and only if it is independent of the particular choice of $q$.
  Thus, we compute
  \begin{align}
   [R_{h(x)}\, R_g\, q,\, \tau_0] ={}& [R_{\a(h(x)^{-1})(g)}\, R_{h(x)}\, q,\, \tau_0]\\
    ={}& [R_{h(x)}\, q,\, \rho(\a(h(x)^{-1})(g))(\tau_0)], \notag
  \end{align}
  which is equal to $[R_{h(x)}\, q,\, \tau_0]$ if and only if
  \begin{equation}
   \rho \big( \a(h(x)^{-1})\,(g) \big) (\tau_0) = \tau_0.
  \end{equation}
  Again, this is the requirement that $\a(h(x)^{-1})(g) \in G$, or, equivalently, that $h(x) \in N_H(G)$ for all $x \in M$.
  As $h$ is smooth and globally well-defined, so is $\tau'$.
\end{proof}

\noindent
In particular, Theorem~\ref{st:rotchar} implies

\begin{Cor}
 The map
 \begin{equation}
  \mathcal{R}: G\P \times C^\infty \big(M,N_H(G) \big) \rightarrow G\P, \quad (\Q,h) \mapsto \mathcal{R}_h\, \Q
 \end{equation}
 defines a right-action of $C^\infty(M,N_H(G))$ on the space $G\P$ of principal $G$-subbundles of $\P$.
\end{Cor}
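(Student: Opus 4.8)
The plan is to verify the three axioms of a right group action. As a preliminary, one checks that $N_H(G)$ is a subgroup of $H$: it contains $1_H$, it is closed under products since $(ab)\,g\,(ab)^{-1} = a\,(b\,g\,b^{-1})\,a^{-1} \in G$ whenever $a,b \in N_H(G)$ and $g \in G$, and it is closed under inversion because $G$ is a Lie subgroup of $H$, so that conjugation by $a \in N_H(G)$ restricts to an automorphism of $G$ whose inverse is conjugation by $a^{-1}$. Hence $C^\infty(M,N_H(G))$, with pointwise multiplication, is a subgroup of $C^\infty(M,H)$, having as identity element the constant map $x \mapsto 1_H$. For well-definedness of $\mathcal{R}$, note that for $\Q \in G\P$ and $h \in C^\infty(M,N_H(G))$ the set $\mathcal{R}_h\,\Q$ is a submanifold of $\P$, since $\mathcal{R}_h$ is a diffeomorphism, and by the implication $(2) \Rightarrow (1)$ of Theorem~\ref{st:rotchar} it is in fact a principal $G$-subbundle of $\P$, that is, an element of $G\P$; thus $\mathcal{R}$ maps $G\P \times C^\infty(M,N_H(G))$ into $G\P$.

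It remains to check the two action identities. Since the identity of $C^\infty(M,N_H(G))$ is the constant map $1_H$, the diffeomorphism it induces sends $p \mapsto R_{1_H}\,p = p$, hence equals $\mathrm{id}_\P$ and acts trivially on every $\Q \in G\P$. For compatibility, recall from the discussion preceding Theorem~\ref{st:rotchar} that $h \mapsto \mathcal{R}_h$ is a right-action of $C^\infty(M,H)$ on $\P$; concretely, $\mathcal{R}_{h_2}(\mathcal{R}_{h_1}\,p) = R_{h_2(\pi(p))}\,R_{h_1(\pi(p))}\,p = R_{(h_1h_2)(\pi(p))}\,p = \mathcal{R}_{h_1h_2}\,p$, using that the $H$-action preserves fibres. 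Restricting to $h_1,h_2 \in C^\infty(M,N_H(G))$ and evaluating on a subbundle $\Q \in G\P$ yields $\mathcal{R}(\Q,h_1h_2) = \mathcal{R}_{h_1h_2}\,\Q = \mathcal{R}_{h_2}(\mathcal{R}_{h_1}\,\Q) = \mathcal{R}(\mathcal{R}(\Q,h_1),h_2)$, where the intermediate object $\mathcal{R}_{h_1}\,\Q$ again lies in $G\P$ by the well-definedness step. This establishes all the axioms.

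The statement presents no genuine obstacle: its content is entirely absorbed into Theorem~\ref{st:rotchar} together with the already-noted fact that $\mathcal{R}$ is a right-action on $\P$. The only points needing slight care are that $N_H(G)$ be closed under inversion — so that $C^\infty(M,N_H(G))$ is a group and not merely a monoid — and that the well-definedness observation be invoked at each intermediate stage of the compatibility computation, so that all the bundles appearing there are indeed elements of $G\P$.
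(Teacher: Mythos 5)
Your proposal is correct and follows the same route the paper intends: the corollary is stated as an immediate consequence of Theorem~\ref{st:rotchar} (which supplies well-definedness via the implication $(2)\Rightarrow(1)$) together with the previously noted fact that $h \mapsto \mathcal{R}_h$ is a right-action of $C^\infty(M,H)$ on $\P$. Your explicit verification of the group structure of $N_H(G)$ and of the two action axioms simply fills in details the paper leaves implicit.
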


\begin{Def}
 We say that two principal $G$-subbundles $(\Q,\Q')$ satisfy the \emph{normal deformation property} if there exists an $h \in C^\infty(M,N_H(G))$ such that $\Q' = \mathcal{R}_h \Q$, i.e.~if they are related by the right-action of $C^\infty(M,N_H(G))$ on $G\P$.
\end{Def}

\noindent
Note that $\Q'$ can always be endowed with a right-action of $G$ by defining
\begin{equation}
 R'_g\, q' \coloneq R_{\alpha(h^{-1})(g)}\, q'.
\end{equation}
However, in general this does not coincide with the restriction of the $H$-action on $\P$ as it is necessary for $\Q'$ to be a principal subbundle of $\P$.
These two right-actions of $G$ on $Q'$ agree if and only if $h \in C^\infty(M,C_H(G))$, i.e. if and only if $h$ takes values in the centraliser of $G$ exclusively.

\subsection{Connections}
An important set of data associated to a principal bundle is the set of its connections.
If there is a $G$-invariant splitting $\h = \g \oplus \m$, the aforementioned right-action of $C^\infty(M,N_H(G))$ on $G\P$ transforms this data in a well-controlled manner.

\begin{Thm}
\label{st:general_isomp_of_connections}
 Let $\Q$ be a principal $G$-subbundle of $\P$, and let there be a $G$-invariant splitting $\h = \g \oplus \m$.
 Then, for any $h \in C^\infty(M,N_H(G))$ there exists a bijective map
 \begin{equation}
  f_{\Q,h}: \C(\Q) \rightarrow \C(\Q'), \quad A \mapsto \widecheck{f}_{\Q'} \circ \widehat{f}_\Q\, (A),
 \end{equation}
 where $\Q' = \mathcal{R}_h \Q$ as before.
\end{Thm}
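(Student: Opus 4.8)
The plan is to show that $f_{\Q,h}$ is a well-defined bijection by exhibiting an explicit two-sided inverse, exploiting that $\mathcal{R}_h$ acts as an honest right-action: since $h \in C^\infty(M,N_H(G))$ and $N_H(G)$ is a group, $h^{-1} \in C^\infty(M,N_H(G))$ as well, and $\mathcal{R}_{h^{-1}} \mathcal{R}_h \Q = \Q$, so $\Q = \mathcal{R}_{h^{-1}} \Q'$. Hence the map $f_{\Q',h^{-1}}: \C(\Q') \to \C(\Q)$ is defined by the very same recipe (note that the ambient splitting $\h = \g \oplus \m$ is $G$-invariant and independent of $h$, so $\widecheck{f}_{\Q}$ and $\widecheck{f}_{\Q'}$ both exist, by Lemma~\ref{st:fhat_fcheck}(2)). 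The theorem will then follow from the two identities $f_{\Q',h^{-1}} \circ f_{\Q,h} = \mathrm{id}_{\C(\Q)}$ and $f_{\Q,h} \circ f_{\Q',h^{-1}} = \mathrm{id}_{\C(\Q')}$.

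First I would verify that $f_{\Q,h}(A)$ really is a connection $1$-form on $\Q'$, which is already guaranteed by Lemma~\ref{st:fhat_fcheck}: $\widehat{f}_\Q(A) \in \C(\P)$ and $\widecheck{f}_{\Q'}$ lands in $\C(\Q')$. So no separate check is needed there; the content is entirely in the inversion. The key computational step is to understand the composite $\widehat{f}_{\Q} \circ \widecheck{f}_{\Q'} \circ \widehat{f}_{\Q'} \circ \widecheck{f}_{\Q}$ acting on a connection $1$-form $\widehat{A}$ on $\P$, restricted and projected appropriately. Concretely: starting from $A \in \C(\Q)$, form $\widehat{A} = \widehat{f}_\Q(A) \in \C(\P)$; then $f_{\Q,h}(A) = pr_\g \circ \widehat{A}_{|\Q'}$; then apply $\widehat{f}_{\Q'}$ to re-extend to $\P$, obtaining some $\widehat{B} \in \C(\P)$; then $f_{\Q',h^{-1}}(f_{\Q,h}(A)) = pr_\g \circ \widehat{B}_{|\Q}$. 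I would show this equals $A$ by showing $\widehat{B} = \widehat{A}$, i.e.\ that the unique $\P$-connection restricting on $\Q'$ to $pr_\g \circ \widehat{A}_{|\Q'}$ is $\widehat{A}$ itself. For that, it suffices to check that $\widehat{A}_{|\Q'}$ already takes values in $\g$, i.e.\ that $pr_\m \circ \widehat{A}_{|\Q'} = 0$: since $\widehat{A}$ was extended from the $G$-connection $A$ on $\Q$, its restriction to $\Q$ is $\g$-valued, and one transports this along $\mathcal{R}_h$ using the intertwining relation $\mathcal{R}_h \circ R_a = R_{\a(h^{-1})(a)} \circ \mathcal{R}_h$ together with $G$-invariance of the splitting, to conclude $\widehat{A}_{|\Q'}$ is $\g$-valued as well (because $\mathcal{R}_h$ maps the $G$-horizontal data of $\Q$ to that of $\Q'$ up to an $\mathrm{Ad}_{N_H(G)}$-twist, which preserves $\g$). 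Then $\widehat{B}_{|\Q'} = \widehat{A}_{|\Q'}$ forces $\widehat{B} = \widehat{A}$ by the uniqueness clause in Lemma~\ref{st:fhat_fcheck}(1), and restricting-and-projecting to $\Q$ returns $A$. The reverse composite is handled symmetrically with the roles of $(\Q,h)$ and $(\Q',h^{-1})$ exchanged.

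The main obstacle I anticipate is the bookkeeping in the step ``$\widehat{A}_{|\Q'}$ is $\g$-valued'': one must track precisely how $\widehat{A}$, which a priori is only known to be $\g$-valued on $\Q$ (not on all of $\P$, where its $\m$-part is generally nonzero), behaves on the translated submanifold $\Q' = \mathcal{R}_h\Q$. Writing $q' = R_{h(x)}q = \mathcal{R}_h(q)$ and pulling back $\widehat{A}$ by $\mathcal{R}_h$, the gauge-type inhomogeneous term produced by the $x$-dependence of $h$ must be shown to lie in $\g$: this is where $h(x) \in N_H(G)$ is used a second time, since $\mathrm{Ad}_{h(x)}$ then preserves $\g$ and hence, by $G$-invariance of $\h = \g\oplus\m$, also preserves $\m$, so the decomposition is respected fibrewise. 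Once this normaliser condition is correctly invoked, the remaining manipulations are the routine naturality properties of $\widehat{f}$, $\widecheck{f}$, and restriction, and the proof closes.
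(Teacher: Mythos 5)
There is a genuine gap: the step on which your whole argument rests --- that $\widehat{A}_{|\Q'}$ is already $\g$-valued, so that $\widehat{B}=\widehat{A}$ --- is false in general, and the paper's own computation shows this. Writing $s'=R_h\circ s$ for a local section of $\Q'$, one has
\begin{equation*}
 s'^*\widehat{f}_\Q(A) = Ad(h^{-1})\circ s^*A + h^*\mu_H ,
\end{equation*}
and while the first term is $\g$-valued because $h$ normalises $G$, the inhomogeneous Maurer--Cartan term $h^*\mu_H$ takes values in $Lie(N_H(G))$, which is strictly larger than $\g$ in general; its $\m$-part $pr_\m\circ h^*\mu_H=s'^*\zeta_h$ is exactly the obstruction recorded in \eqref{eq:A'-A}. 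The normaliser condition controls $Ad(h(x))$, i.e.\ the conjugation action, but says nothing about where the \emph{derivative} of $h$ lands. Concretely, for a conformal rescaling $h=\phi\,\mathbbm{1}_D$ with non-constant $\phi$ one gets $\zeta_h=\d\log(\phi)\otimes\mathbbm{1}_D\neq 0$, so $\widehat{f}_\Q(A)_{|\Q'}$ has a nonzero $\m$-component and $\widehat{B}\neq\widehat{A}$.

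Moreover, the fallback of still proving $f_{\Q',h^{-1}}\circ f_{\Q,h}=\mathrm{id}$ by direct computation does not go through under the stated hypotheses: using $(h^{-1})^*\mu_H=-Ad(h)\circ h^*\mu_H$ one finds
\begin{equation*}
 s^*\bigl(f_{\Q',h^{-1}}(f_{\Q,h}(A))\bigr)-s^*A = -\,pr_\g\circ Ad(h)\circ pr_\m\circ h^*\mu_H ,
\end{equation*}
which vanishes only if $Ad(h)$ preserves $\m$. The theorem assumes only $G$-invariance of the splitting, and $h$ takes values in $N_H(G)$, not in $G$, so $Ad(h)\,\m=\m$ is an additional assumption (it holds for constant $h$, for central $h$, or when $\m=\g^\perp$ with respect to an $Ad(H)$-invariant metric, but not in general). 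The paper avoids both problems by never trying to invert $f_{\Q,h}$ through $f_{\Q',h^{-1}}$: it uses the affine structure of $\C(\Q)$ over $\Omega^1_{hor}(\Q,\g)^{(G,Ad)}$, observes that for a \emph{horizontal} form $\omega$ of type $Ad$ the extension--restriction map $\omega\mapsto\widehat{\omega}_{|\Q'}$ produces no Maurer--Cartan term (so $\widehat{\omega}_{|\Q'}$ really is $\g$-valued, by $Ad(h^{-1})\g=\g$) and is a linear isomorphism, and then deduces bijectivity from $f_{\Q,h}(A+\omega)=f_{\Q,h}(A)+\widehat{\omega}_{|\Q'}$. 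To salvage your approach you would have to either impose one of the extra conditions above or replace $f_{\Q',h^{-1}}$ by the genuine inverse $B\mapsto A+\widehat{(B-f_{\Q,h}(A))}_{|\Q}$.
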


\begin{proof}
 Lemma~\ref{st:fhat_fcheck} directly implies that $f_{\Q,h}(A) \in \C(\Q')$ is a connection on $\Q'$.
 In order to prove that $f_{\Q,h}$ is bijective, recall that $\C(\Q)$ is an affine vector space modelled over the vector space $\Omega^1_{hor}(\Q,\g)^{(G,Ad)}$ (cf.~\cite{Baum}, for instance).
 Due to its $Ad$-equivariance, every $\omega \in \Omega^1_{hor}(\Q,\g)^{(G,Ad)}$ can be extended to a horizontal, $Ad$-equivariant $1$-form $\widehat{\omega} \in \Omega^1_{hor}(\P,\h)^{(H,Ad)}$.
 Its restriction to $\Q'$ is again horizontal and $Ad$-equivariant.
 Moreover, it is $\g$-valued on $\Q'$.
 This can be seen either by using local sections or directly from the construction of $\widehat{\omega}$.
 
 In particular, on $\Omega^1_{hor}(\Q,\g)^{(G,Ad)}$ the procedure of extending to $\P$ and restricting to $\Q'$ is linear and has an inverse given by applying the same procedure starting from $\Q'$.
 Therefore, the auxiliary map given by
 \begin{equation}
  f: \Omega^1_{hor}(\Q,\g)^{(G,Ad)} \rightarrow \Omega^1_{hor}(\Q',\g)^{(G,Ad)},\quad \omega \mapsto \widehat{\omega}_{\vert \Q'}
 \end{equation}
 is an isomorphism of vector spaces.
 
 Now, consider a connection $A \in \C(\Q)$.
 Every connection on $\Q$ is of the form $A + \omega$ for an $\omega \in \Omega^1_{hor}(\Q,\g)^{(G,Ad)}$.
 Upon application of $f_{\Q,h}$ we obtain
 \begin{equation}
  f_{\Q,h}(A + \omega) = f_{\Q,h}(A) + f(\omega) = f_{\Q,h}(A) + \widehat{\omega}_{\vert \Q'}.
 \end{equation}
 Since $f_{\Q,h}(A) \in \C(\Q')$ and $f$ is bijective, $f_{\Q,h}$ is bijective as well.
 Note that from this equation we also see that $f_{\Q,h}$ is still defined independently of the choice of $A$.
\end{proof}

\noindent
In local representations, this means the following:
For every local section $s \in \Gamma(U,\Q)$ we obtain a local section $s' = R_h \circ s \in \Gamma(U,\Q')$, and for any connection $A \in \C(\Q)$ there exists a connection $f_{\Q,h}(A) \in \C(\Q')$ with local representation
\begin{equation}
\label{eq:f_Q,h(A)_local_rep}
 \begin{aligned}
  s'^* \big( f_{\Q,h}(A) \big) ={}& pr_{\g} \circ s'^*\, \widehat{f}_\Q(A)\\
   ={}& Ad(h^{-1}) \circ s^*A + pr_\g \circ h^*\mu_H,
  \end{aligned}
\end{equation}
where $\mu_H \in \Omega^1(H,\h)$ is the Maurer-Cartan form of the Lie group $H$.
We can drop the projection in the first term since $s^* A$ is $\g$-valued and $Ad(h^{-1})$ preserves $\g$.
Note that, due to the projection in the second term, the extensions of $A$ and $f_{\Q,h}(A)$ to connections on $\P$ differ in general.
Explicitly, we have
\begin{equation}
 s'^* \widehat{f}_\Q(A) = Ad(h^{-1}) \circ s^*A + h^*\mu_H,
\end{equation}
and, therefore,
\begin{equation}
\label{eq:A'-A}
 s'^* \big(\widehat{f}_\Q(A) - f_{\Q,h}(A) \big) = pr_\m \circ h^*\mu_H
\end{equation}
is the obstruction preventing the extension of $A$ to $P$ from restricting to a connection on $\Q'$
\footnote{The independence of the right-hand side of \eqref{eq:A'-A} of the choice of a local section $s' \in \Gamma(U,\Q')$ may be checked directly, using the fact that $X \in Lie(N_H(G))$ if and only if it satisfies $Ad(g^{-1})(X) - X \in \g \ \forall\, g \in G$.}.
Therefore, we have

\begin{Prop}
\label{st:general_isomp_algebraic}
 The map $f_{\Q,h}: \C(\Q) \rightarrow \C(\Q')$ introduced in Theorem~\ref{st:general_isomp_of_connections} is given explicitly by
 \begin{equation}
 \widehat{f}_\Q(A) - f_{\Q,h}(A) = \zeta_h \quad \forall\, A \in \C(\Q),
\end{equation}
where $\zeta_h \in \Omega^1_{hor}(\P,\h)^{(H,Ad)}$ is the unique horizontal $1$-form of type $Ad$ on $\P$ having local representations
\begin{equation}
 s'^* \zeta_h = pr_\m \circ h^*\mu_H \quad \forall\, s' \in \Gamma(U,\Q').
\end{equation}
\end{Prop}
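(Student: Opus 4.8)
The plan is to reduce the statement to the local identity~\eqref{eq:A'-A} already obtained, the only new work being to show that $\zeta_h$ is a well-defined global object on $\P$, that it is unique, and that the difference does not depend on the choice of $A$. Here the difference $\widehat{f}_\Q(A) - f_{\Q,h}(A)$ is to be read as $\widehat{f}_\Q(A) - \widehat{f}_{\Q'}\big(f_{\Q,h}(A)\big)$, i.e.~as the difference of the two induced connection $1$-forms on $\P$; being a difference of connections, it automatically lies in $\Omega^1_{hor}(\P,\h)^{(H,Ad)}$.

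\emph{Existence and uniqueness of $\zeta_h$.} Choose an open cover $\{U_i\}_{i\in\Lambda}$ of $M$ together with local sections $s_i'\in\Gamma(U_i,\Q')$; since $\Q'$ is a principal $G$-subbundle, on overlaps one has $s_j' = R_{g_{ij}}\circ s_i'$ with transition functions $g_{ij}\colon U_i\cap U_j\to G$. A $1$-form in $\Omega^1_{hor}(\P,\h)^{(H,Ad)}$ is the same datum as a family of $\h$-valued $1$-forms $\eta_i$ on the $U_i$ satisfying $\eta_j = Ad(g_{ij}^{-1})\circ\eta_i$ on overlaps --- no Maurer--Cartan term appears because such a form, being horizontal, vanishes on vertical vectors. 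So I must verify the cocycle compatibility of the prescribed data, i.e.~that $Ad(g_{ij}^{-1})\circ\big(pr_\m\circ h^*\mu_H\big) = pr_\m\circ h^*\mu_H$. Since $h$ takes values in $N_H(G)$, the form $h^*\mu_H$ takes values in $Lie(N_H(G))$; by the characterisation recalled in the footnote, $Ad(g^{-1})X - X\in\g$ for every $X\in Lie(N_H(G))$ and $g\in G$, so $\big(Ad(g_{ij}^{-1}) - \mathrm{id}\big)\circ h^*\mu_H$ is $\g$-valued and hence annihilated by $pr_\m$; and because the splitting $\h=\g\oplus\m$ is $G$-invariant, $Ad(g^{-1})$ commutes with $pr_\m$. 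Combining these, $Ad(g_{ij}^{-1})\circ pr_\m\circ h^*\mu_H = pr_\m\circ Ad(g_{ij}^{-1})\circ h^*\mu_H = pr_\m\circ h^*\mu_H$, which is exactly the required compatibility. The prescribed data therefore glue to a $\zeta_h\in\Omega^1_{hor}(\P,\h)^{(H,Ad)}$; since every local section of $\Q'$ has the form $R_h\circ s$ for a local section $s$ of $\Q$ (because $\mathcal{R}_h$ restricts to a bijection $\Q\to\Q'$ over the identity of $M$), in fact $s'^*\zeta_h = pr_\m\circ h^*\mu_H$ for every $s'\in\Gamma(U,\Q')$, as claimed. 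Uniqueness is immediate, a horizontal form of type $Ad$ being determined by its pullbacks along any covering family of local sections.

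\emph{Identification and independence of $A$.} Fix $A\in\C(\Q)$. By~\eqref{eq:A'-A}, the pullback of $\widehat{f}_\Q(A) - \widehat{f}_{\Q'}\big(f_{\Q,h}(A)\big)$ along every $s'\in\Gamma(U,\Q')$ equals $pr_\m\circ h^*\mu_H$, so by the uniqueness just established this difference equals $\zeta_h$. For independence of $A$, write an arbitrary connection on $\Q$ as $A+\omega$ with $\omega\in\Omega^1_{hor}(\Q,\g)^{(G,Ad)}$; Theorem~\ref{st:general_isomp_of_connections} gives $f_{\Q,h}(A+\omega) = f_{\Q,h}(A) + \widehat{\omega}_{\vert \Q'}$ while $\widehat{f}_\Q(A+\omega) = \widehat{f}_\Q(A)+\widehat{\omega}$, and, $\widehat{\omega}$ being the unique horizontal, $Ad$-equivariant, $\g$-valued extension of $\widehat{\omega}_{\vert \Q'}$, one has $\widehat{f}_{\Q'}\big(\widehat{\omega}_{\vert \Q'}\big) = \widehat{\omega}$; thus the two copies of $\widehat{\omega}$ cancel and the difference is the same for $A+\omega$ as for $A$. (This is equally visible from~\eqref{eq:A'-A}, whose right-hand side does not involve $A$.)

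The principal obstacle is the gluing step, which rests entirely on the Lie-algebra description $Lie(N_H(G)) = \{X\in\h : Ad(g^{-1})X - X\in\g\ \forall g\in G\}$ quoted in the footnote; if one does not wish to take it for granted, one should first prove it. For the inclusion needed here it suffices to differentiate at $t=0$ the curve $t\mapsto g^{-1}\exp(tX)\,g\,\exp(-tX)$: for $X\in Lie(N_H(G))$ and $g\in G$ this curve lies in $G$ and passes through the identity, and its derivative is $Ad(g^{-1})X - X$, which therefore lies in $\g$. Granting this, the remainder is the routine cocycle and equivariance bookkeeping above together with a reference to~\eqref{eq:A'-A}.
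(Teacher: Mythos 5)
Your proof is correct and follows essentially the same route as the paper, which derives the proposition directly from the local identity \eqref{eq:A'-A} and relegates the well-definedness of $\zeta_h$ (independence of the choice of $s'$) to the footnote invoking $Ad(g^{-1})X - X \in \g$ for $X \in Lie(N_H(G))$. You merely spell out the details the paper leaves implicit --- the cocycle check using the $G$-invariance of the splitting, the uniqueness of the extension, and the independence of $A$ --- all of which are carried out correctly.
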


Furthermore, note that the local representations $s'^* ( f_{\Q,h}(A) )$ transform in the desired way upon changing the local section $s'(x) \mapsto R_{g(x)} \circ s'(x)$.\medskip

For the more restrictive case of $h \in C^\infty(M,C_H(G)) \subset C^\infty(M,N_H(G))$, i.e.~$h$ taking values in the centraliser of $G$ in $H$, there is a simpler bijection of connections.
In this case, $\R_h: \Q \rightarrow \Q'$ is an isomorphism of principal fibre bundles, and we obtain a subclass of the deformations of $G$-structures investigated in~\cite{Stock}.
We can use the pullback to transport connections between $\Q$ and $\Q'$.
(Note that we might still have $\Q \neq \Q'$ if $C_H(G) \nsubset G$.)
This directly yields

\begin{Le}
\label{st:central_isomorphism_of_connections}
 If $h \in C^\infty(M,C_H(G))$, and $\Q$, $\Q'$, $s$ and $s'$ are as above, then for every $A \in \C(\Q)$ there exists an $A' = {\R_{h^{-1}}}^* A \in \C(\Q')$.
 In particular, all local representations of the connection forms and their field strengths with respect to $s$ and $s'$ coincide.
 That is,
 \begin{equation}
  s'^*A' = s^*A \and s^*F^A = s'^* F^{A'} \quad \forall\, s \in \Gamma(U,\Q).
 \end{equation}
\end{Le}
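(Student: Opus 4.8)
The plan is to realise $\R_h$ as an isomorphism of principal $G$-bundles from $\Q$ onto $\Q'$, to transport connections by pulling back along its inverse, and then to read off the statement about local representations from the compatibility of this pullback with the relation $s' = R_h \circ s$ between local sections. The main computational point — and the only place where the centraliser hypothesis, as opposed to the weaker normaliser condition of Theorem~\ref{st:rotchar}, is genuinely needed — is the $G$-equivariance of $\R_h$; everything else is naturality of the pullback.

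First I would check that $\R_h$ restricts to an isomorphism $\R_h\colon \Q \to \Q'$ of principal $G$-bundles over $M$. By construction $\Q' = \R_h\,\Q$, and since $\R_h$ is a diffeomorphism of $\P$, its restriction is a diffeomorphism of $\Q$ onto $\Q'$; moreover $\pi \circ \R_h = \pi$, so it covers $\mathrm{id}_M$. For $G$-equivariance, take $q \in \Q$ with $x = \pi(q)$ and $g \in G$; since $R_g$ is fibre-preserving, $\pi(R_g\,q) = x$, and therefore
\[
 \R_h(R_g\, q) = R_{h(x)}\, R_g\, q = R_{g\,h(x)}\, q = R_{h(x)\,g}\, q = R_g\, R_{h(x)}\, q = R_g\big(\R_h\, q\big),
\]
where the middle equality $g\,h(x) = h(x)\,g$ is exactly the assumption $h(x) \in C_H(G)$. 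The inverse map is $\R_{h^{-1}}\colon \Q' \to \Q$, which is an isomorphism of principal $G$-bundles for the same reasons.

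Next I would invoke the standard fact that the pullback of a connection $1$-form along an isomorphism of principal $G$-bundles is again a connection $1$-form, and set $A' \coloneq (\R_{h^{-1}})^*A$. The two properties to verify are: (i) $A'$ reproduces the generators of the $G$-action — because $\R_{h^{-1}}$ is $G$-equivariant it intertwines the fundamental vector field of $X \in \g$ on $\Q'$ with that on $\Q$, so $A'$ evaluated on the former equals $A$ evaluated on the latter, namely $X$; and (ii) $Ad$-equivariance, which follows from $R_g^*A' = (R_g \circ \R_{h^{-1}})^*A = (\R_{h^{-1}} \circ R_g)^*A = \R_{h^{-1}}^*(Ad(g^{-1}) \circ A) = Ad(g^{-1}) \circ A'$. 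Hence $A' \in \C(\Q')$, and $A \mapsto A'$ is the asserted assignment.

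Finally, for the local representations: given $s \in \Gamma(U,\Q)$, the corresponding section of $\Q'$ is $s' = R_h \circ s = \R_h \circ s$ (using $\pi(s(x)) = x$), whence $\R_{h^{-1}} \circ s' = s$. Therefore
\[
 s'^*A' = s'^*\big((\R_{h^{-1}})^*A\big) = (\R_{h^{-1}} \circ s')^*A = s^*A.
\]
Since the curvature is natural under bundle maps, $F^{A'} = (\R_{h^{-1}})^*F^A$, and the identical computation yields $s'^*F^{A'} = s^*F^A$; alternatively one applies $s'^*$ directly to $F^{A'} = \d A' + \tfrac{1}{2}[A',A']$ and uses that pullback commutes with $\d$ and with the bracket. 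I do not anticipate a real obstacle: the argument is essentially bookkeeping, with the only delicate step being the group-law manipulation in the equivariance identity above, where the centraliser condition is used.
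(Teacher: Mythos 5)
Your proposal is correct and follows essentially the same route as the paper, which obtains the lemma precisely by observing that for centraliser-valued $h$ the map $\R_h\colon \Q \to \Q'$ is an isomorphism of principal $G$-bundles and then transporting connections by pullback. Your write-up merely makes explicit the equivariance computation and the naturality arguments that the paper leaves as "this directly yields".
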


\noindent
This is, of course, very easy to use in explicit computations.

\subsection{Intrinsic torsion}
Most important geometric features of metric $G$-structures (i.e. where $G \subset SO(D)$ and $P = F(M)$, see Section~\ref{sect:G_structures_and_instantons}) are governed by their intrinsic torsion~\cite{GrayHervella,Chiossi:2002tw}.
Therefore, it seems worthwhile to investigate the change of intrinsic torsion of a $G$-structure upon application of normal deformations.
We again assume that we can split $\h = \g \oplus \m$ in a $G$-invariant manner.
Generalising the notion of intrinsic torsion of a metric $G$-structure (cf. e.g.~\cite{Alexandrov2005}), to any principal $G$-subbundle of $\P$ and a connection $A_0 \in \C(\P)$ we can assign the \emph{intrinsic torsion of $\Q$ with respect to $A_0$}, given by
\begin{equation}
\label{eq:inttor}
 T_\Q(A_0)_{\vert \Q} = pr_\m \circ A_{0\,\vert \Q} \ \in \Omega^1_{hor}(\Q,\m)^{(G,Ad)}.
\end{equation}
Since this is a $1$-form of type $Ad$ on $\Q$, it extends uniquely to a $1$-form $\widehat{T}_\Q(A_0) \in \Omega^1_{hor}(\P,\h)^{(H,Ad)}$ just as in the proof of Theorem~\ref{st:general_isomp_of_connections}.
For metric $G$-structures one takes $A_0$ to be the Levi-Civita connection of the ambient orthonormal frame bundle $SO(M,g)$.
However, note that a normal deformation of a metric $G$-structure may change the metric that the $G$-structure is compatible with if $h$ is not $SO(D)$-valued.
Therefore, the intrinsic torsions of these $G$-structures will, in general, be defined with respect to Levi-Civita connections of different metrics.

However, in order to really be able to compare these forms in terms of their interpretation as intrinsic torsion, it seems more appropriate to consider their corresponding bundle-valued $1$-forms $\tau_\Q(A_0) \in \Omega^1(M,Ad(\Q))$
\footnote{One can indeed show that the adjoint bundles of $\Q$ and $\Q'$ coincide as subbundles of $Ad(\P)$ if $(\Q,\Q')$ satisfy the normal deformation property.}.
Recall that for any local section $s \in \Gamma(U,\Q)$,
\begin{equation}
 \tau_\Q(A_0)_{\vert U} = \big[ s, s^* \big( T_\Q(A_0) \big) \big].
\end{equation}
In this language we may deduce how normal deformations change the intrinsic torsion.

\begin{Prop}
\label{st:inttor}
 Let $(\Q,\Q')$ be principal $G$-subbundles of $\P$ satisfying the normal deformation property with respect to $h \in C^\infty(M,N_H(G))$, and where there is a $G$-invariant splitting $\h = \g \oplus \m$.
 For $A_0, A_0' \in \C(\P)$ we have that
 \begin{align}
 \label{eq:inttor_and_nedef}
  \big( \tau_{\Q'}(A_0') - \tau_\Q(A_0) \big)_{\vert U}
   = \big[ s,\, &{}Ad(h) \circ pr_\m \circ Ad(h^{-1}) \circ s^*A_0' - pr_\m \circ s^*A_0 \notag\\
    &{}+ Ad(h) \circ pr_\m \circ h^*\mu_H \big]
 \end{align}
 for all $s \in \Gamma(U,\Q)$.
 
 In particular, if we consider $A_0' = A_0$ and assume that $\tau_{\Q}(A_0) = 0$, then
 \begin{equation}
  \big( \tau_{\Q'}(A_0') - \tau_\Q(A_0) \big)_{\vert U} = [ s', s'^* \zeta_h ] \quad s \in \Gamma(U,\Q),
 \end{equation}
 where $\zeta_h \in \Omega^1_{hor}(\P,\h)^{(H,Ad)}$ is as in Proposition~\ref{st:general_isomp_algebraic}.
\end{Prop}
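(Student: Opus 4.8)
The plan is to reduce everything to local representations and to recycle the computation already carried out in \eqref{eq:f_Q,h(A)_local_rep}. Fix a local section $s \in \Gamma(U,\Q)$ and set $s' = R_h \circ s \in \Gamma(U,\Q')$, as in the discussion following Theorem~\ref{st:general_isomp_of_connections}. By the definition \eqref{eq:inttor} of the intrinsic torsion, $s^*\big(T_\Q(A_0)\big) = pr_\m \circ s^*A_0$, so that $\tau_\Q(A_0)_{\vert U} = [s,\, pr_\m \circ s^*A_0]$, and likewise $\tau_{\Q'}(A_0')_{\vert U} = [s',\, pr_\m \circ s'^*A_0']$. The whole statement is then a matter of rewriting the second expression in the frame of $s$ and subtracting.

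First I would apply the transformation law for the pullback of a connection $1$-form under the change of section $s \mapsto s' = R_h \circ s$ with $h$ non-constant, namely $s'^*A_0' = Ad(h^{-1}) \circ s^*A_0' + h^*\mu_H$ — the very formula underlying \eqref{eq:f_Q,h(A)_local_rep} — so that $pr_\m \circ s'^*A_0' = pr_\m \circ Ad(h^{-1}) \circ s^*A_0' + pr_\m \circ h^*\mu_H$. Next, using the defining relation $[R_h\, p,\, X] = [p,\, Ad(h)(X)]$ in $Ad(\P)$, I would bring the $\Q'$-term into the frame of $s$: $[s',\, pr_\m \circ s'^*A_0'] = [s,\, Ad(h) \circ pr_\m \circ s'^*A_0']$. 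Substituting the previous expression and subtracting $\tau_\Q(A_0)_{\vert U} = [s,\, pr_\m \circ s^*A_0]$ produces exactly \eqref{eq:inttor_and_nedef}. Here one uses the fact recorded in the footnote that $Ad(\Q) = Ad(\Q')$ as subbundles of $Ad(\P)$ when $(\Q,\Q')$ satisfy the normal deformation property, so that the difference $\tau_{\Q'}(A_0') - \tau_\Q(A_0)$ is a well-defined element of $\Omega^1(M,Ad(\Q))$; independence of the right-hand side from the choice of $s$ follows exactly as in the footnote to \eqref{eq:A'-A}, via $X \in Lie(N_H(G)) \Leftrightarrow Ad(g^{-1})(X) - X \in \g$ for all $g \in G$.

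For the special case $A_0' = A_0$ with $\tau_\Q(A_0) = 0$, the hypothesis says that $s^*A_0$ is $\g$-valued, i.e. $pr_\m \circ s^*A_0 = 0$, which kills the second term on the right-hand side of \eqref{eq:inttor_and_nedef}. Since $h$ takes values in $N_H(G)$, the automorphism $Ad(h^{-1})$ preserves $\g$, so $Ad(h^{-1}) \circ s^*A_0$ is again $\g$-valued and $pr_\m \circ Ad(h^{-1}) \circ s^*A_0 = 0$, which kills the first term as well. What remains is $\big(\tau_{\Q'}(A_0') - \tau_\Q(A_0)\big)_{\vert U} = [s,\, Ad(h) \circ pr_\m \circ h^*\mu_H] = [s',\, pr_\m \circ h^*\mu_H]$, and by Proposition~\ref{st:general_isomp_algebraic} the right-hand factor is precisely $s'^*\zeta_h$, which gives the claimed identity.

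I expect the only delicate point to be the bookkeeping of the conjugations by $Ad(h)$ against the projections $pr_\g$, $pr_\m$ — in particular the observation that, although $\m$ need not be $Ad(h)$-invariant, the subspace $\g$ is, which is exactly what makes the $\tau_\Q(A_0) = 0$ case collapse to $[s',\, s'^*\zeta_h]$. The identification $Ad(\Q) = Ad(\Q')$ inside $Ad(\P)$, required for the difference of the two bundle-valued $1$-forms to make sense, is the other ingredient that must be established beforehand; once these are in hand, the argument is a direct application of Lemma~\ref{st:fhat_fcheck} and the gauge-transformation formula used above.
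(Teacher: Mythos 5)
Your proposal is correct and follows essentially the same route as the paper's own proof: both express $\tau_{\Q'}(A_0')_{\vert U}$ in the frame $s$ via the equivariance relation $[s',X]=[s,Ad(h)(X)]$, apply the gauge-transformation formula $s'^*A_0' = Ad(h^{-1})\circ s^*A_0' + h^*\mu_H$, and in the special case use that $Ad(h^{-1})$ preserves $\g$ for $h$ valued in $N_H(G)$ to kill the first term. The only cosmetic difference is that the paper phrases the change of frame through the extended form $\widehat{T}_{\Q'}(A_0')$ pulled back along $s$, which is equivalent to your direct use of the associated-bundle equivalence relation.
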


\begin{proof}
 Consider local sections $s$ of $\Q$ and $s' = R_h \circ s$ of $\Q'$ as before.
 We compute
 \begin{align}
  \tau_{\Q'}(A_0')_{\vert U} ={}& \big[ s,\, s^* \big( \widehat{T}_{\Q'}(A_0') \big) \big] \notag\\
   ={}& \big[ s,\, s'^* \big( Ad(h) \circ T_{\Q'}(A_0') \big) \big] \\
   ={}& \big[ s,\,  Ad(h) \circ pr_\m \circ s'^*A_0' \big] \notag\\
   ={}& \big[ s,\,  Ad(h) \circ pr_\m \circ Ad(h^{-1}) \circ s^*A_0' + Ad(h) \circ pr_\m \circ h^*\mu_H \big] \notag
 \end{align}
 This implies the first statement.
 
 If $A_0' = A_0$, and $A_0$ restricts to a connection on $\Q$, $s^*A_0' = s^*A_0$ is $\g$-valued.
 Therefore, $Ad(h^{-1}) \circ s^*A_0'$ is $\g$-valued whence it is annihilated by $pr_\m$, and we obtain the second statement of the proposition.
\end{proof}

From this one can in principle compute the intrinsic torsion of $\Q'$ from that of $\Q$.
One only needs to know the reference connections $A_0$ and $A_0'$.
However, in the situation of the second part of this proposition, the change of intrinsic torsion is completely determined by $\zeta_h$ already.

\section{G-structures and instantons}
\label{sect:G_structures_and_instantons}

In the previous section we considered general principal $G$-subbundles of a principal fibre bundle and investigated normal deformations thereof.
Let us now specialise to the case where the ambient principal bundle is the frame bundle of $M$, i.e. $\P = F(M)$ and $H = GL(D,\mathbb{R})$.
Principal $G$-subbundles $\Q$ of $F(M)$ are then called \emph{$G$-structures} on $M$.

In particular, in the remainder of this paper we concentrate on metric $G$-structures, i.e. $G$-structures where $G \subset SO(D) \subset GL(D,\mathbb{R})$.
These $G$-structures are compatible with a Riemannian metric $g$ in the sense that $\Q \subset SO(M,g)$ is a principal subbundle of the bundle of orthonormal frames as defined by $g$.

Any Riemannian metric $g$ on $M$ induces an isomorphism
\begin{equation}
 I_g : Ad(SO(M,g)) \rightarrow \Lambda^2 T^*M
\end{equation}
from the adjoint bundle of $SO(M,g)$ to $\Lambda^2T^*M$ (cf.~\cite{HarlandNoelle}).

\begin{Def}
 The image of the restriction of $I_g$ to the adjoint bundle $Ad(\Q)$ of $\Q$ defines the \emph{instanton bundle} $W(\Q)$ associated to $\Q$.
 That is,
 \begin{equation}
  W(\Q) \coloneq I_g\big( Ad(\Q) \big).
 \end{equation}
 A connection $A \in \C(\B)$ on a principal bundle $(\B,\pi,M,B)$ over $M$ whose field strength satisfies
 \begin{equation}
  F_A \in \Gamma \big( W(\Q) \otimes Ad(\B) \big)
 \end{equation}
 is called an \emph{instanton for $\Q$}.
\end{Def}

In contrast to the case of generic connections as treated in the previous section, there seems to be no generic identification of instantons for $\Q$ and $\Q' = \R_h\, \Q$.
In particular, the isomorphism constructed in Theorem~\ref{st:general_isomp_of_connections} does, in general, not map instantons of $\Q$ to instantons of $\Q'$.
Thus, we choose to investigate a more constrained situation.
Both $G$-structures will certainly define the same instantons if their instanton bundles coincide, i.e.~if $W(\Q) = W(\Q')$.
In order to determine when this is the case, consider a local section $e \in \Gamma(U,\Q)$ of $\Q$.
Note that, since $\P = F(M)$, $e = (e_1,\ldots,e_D)$ is a local (orthonormal) frame.
In the above notation the local bases are defined via
\begin{equation}
 e_i = [e,v_i],
\end{equation}
where $(v_1,\ldots,v_D)$ is the standard orthonormal basis of $\mathbb{R}^D$.\\
A $2$-form $\omega$ is a section of $W(\Q)$ if and only if its components $\omega_{ij}$ with respect to local sections $e$ of $\Q$ form a matrix in $\g \subset \so(D)$.
Now, consider the local section $e' = R_h \circ e \in \Gamma(U,\Q')$.
As local frames, $e'_i = \rho(h)\indices{^j_i}\, e_j$, where $\rho$ is the standard representation of $GL(D,\mathbb{R})$ on $\mathbb{R}^D$.
Therefore, a section $\omega \in \Gamma(W(\Q))$ is a section of $W(\Q')$ if and only if the components of $\omega$ with respect to the local frame $e'$ form a matrix in $\g$.
If $\beta$ and $\beta'$ are the local coframes dual to $e$ and $e'$, respectively, we have
\begin{equation}
 \omega = \frac{1}{2}\, \omega'_{ij}\, \b'^i \wedge \b'^j = \frac{1}{2}\, \rho(h)\indices{^i_a}\, \omega'_{ij}\, \rho(h)\indices{^j_b}\, \b^a \wedge \b^b.
\end{equation}
From this we deduce

\begin{Prop}
\label{st:W(Q)=W(Q')}
 Let $\Q$ and $\Q'$ be two metric $G$-structures satisfying the normal deformation property with respect to $h \in C^\infty(M,N_H(G))$.
 Their instanton bundles coincide if and only if the map
 \begin{equation}
  \Phi_{h(x)}: \so(D) \rightarrow \so(D), \quad \omega_{ij} \mapsto \rho(h(x))\indices{^a_i}\, \omega_{ab}\, \rho(h(x))\indices{^b_j}
 \end{equation}
 preserves the subspace $\g \subset \so(D)$ for all $x \in M$.
 In particular, the instanton moduli spaces of $\Q$ and $\Q'$ coincide.
\end{Prop}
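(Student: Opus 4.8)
The plan is to reduce the statement "$W(\Q) = W(\Q')$'' to a fibrewise linear-algebra condition by working through the explicit local-frame computations already set up immediately before the proposition. First I would recall that, by definition, a $2$-form $\omega \in \Omega^2(M)$ is a section of $W(\Q)$ precisely when, in any local frame $e \in \Gamma(U,\Q)$, the component matrix $(\omega_{ij})$ lies in the subalgebra $\g \subset \so(D)$ under the identification $I_g$; and it is a section of $W(\Q')$ precisely when its components $(\omega'_{ij})$ with respect to a local frame $e' \in \Gamma(U,\Q')$ lie in $\g$. Since $(\Q,\Q')$ satisfy the normal deformation property, every local section $e$ of $\Q$ yields the local section $e' = R_h \circ e$ of $\Q'$, and the two coframes are related by the displayed formula $\omega'_{ij}\,\b'^i \wedge \b'^j = \rho(h)\indices{^i_a}\,\omega'_{ij}\,\rho(h)\indices{^j_b}\,\b^a \wedge \b^b$. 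Comparing components in the frame $e$ gives $\omega_{ab} = \rho(h)\indices{^i_a}\,\omega'_{ij}\,\rho(h)\indices{^j_b}$, i.e. $(\omega_{ab}) = \Phi_{h(x)}(\omega'_{ij})$ pointwise.

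Next I would assemble these relations. The instanton bundles coincide iff, for every $x$ and every $\omega$ in the fibre, "$(\omega_{ij}) \in \g$'' is equivalent to "$(\omega'_{ij}) \in \g$''. By the component relation just derived, the fibre $W(\Q)_x$ corresponds (via the $e$-frame) exactly to $\g$, while the fibre $W(\Q')_x$ corresponds to $\Phi_{h(x)}(\g)$ — or more precisely, a form lies in $W(\Q')_x$ iff its $e$-components lie in $\Phi_{h(x)}(\g)$. Hence $W(\Q)_x = W(\Q')_x$ for all $x$ iff $\Phi_{h(x)}(\g) = \g$ as subspaces of $\so(D)$, for all $x \in M$. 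One should note that $\Phi_{h(x)}$ is invertible (it is conjugation by the invertible matrix $\rho(h(x))$, post-composed with the transpose-type index contraction, or one checks directly that $\Phi_{h(x)}^{-1} = \Phi_{h(x)^{-1}}$ since $h(x) \in N_H(G)$ and the construction is natural), so that "$\Phi_{h(x)}$ preserves $\g$'' — meaning $\Phi_{h(x)}(\g) \subseteq \g$ — already forces $\Phi_{h(x)}(\g) = \g$; this is the small point that lets us phrase the condition as mere preservation rather than as an equality of images. Finally, since the instanton bundle is the only datum entering the instanton condition $F_A \in \Gamma(W(\Q) \otimes Ad(\B))$, equality $W(\Q) = W(\Q')$ immediately yields that the two instanton conditions, and hence the two instanton moduli spaces, coincide.

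The only genuine subtlety I anticipate is bookkeeping: making sure the index placement in $\Phi_{h(x)}$ matches the transition formula for the coframes and that the map is literally conjugation-type so that preservation of $\g$ is equivalent to equality of images. Concretely, one must verify that $\Phi_{h(x)}$ maps $\so(D)$ to $\so(D)$ at all — this uses that $\rho(h(x))$, while only $GL(D,\mathbb R)$-valued in general, has $h(x) \in N_H(G) \subseteq \mathrm{something}$; but actually for a metric $G$-structure the relevant point is just that $\Phi_{h(x)}$ is the transpose-conjugation $\omega \mapsto \rho(h(x))^T\,\omega\,\rho(h(x))$ in matrix form, which preserves antisymmetry, so $\Phi_{h(x)}$ is a well-defined linear automorphism of $\so(D)$. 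Everything else is a direct transcription of the pre-proposition computation, so the proof is short.

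\begin{proof}
 Fix a point $x \in M$, an open set $U \ni x$, and a local section $e \in \Gamma(U,\Q)$; as noted above, $e' = R_h \circ e \in \Gamma(U,\Q')$ by the normal deformation property. Write $\b,\b'$ for the dual coframes. A $2$-form $\omega$ on $U$ is a section of $W(\Q)$ iff its components $(\omega_{ij})$ in the frame $e$ form a matrix in $\g \subset \so(D)$, and a section of $W(\Q')$ iff its components $(\omega'_{ij})$ in the frame $e'$ form a matrix in $\g$. From
 \begin{equation*}
  \omega = \tfrac{1}{2}\, \omega'_{ij}\, \b'^i \wedge \b'^j = \tfrac{1}{2}\, \rho(h)\indices{^i_a}\, \omega'_{ij}\, \rho(h)\indices{^j_b}\, \b^a \wedge \b^b
 \end{equation*}
 we read off $\omega_{ab} = \rho(h)\indices{^i_a}\, \omega'_{ij}\, \rho(h)\indices{^j_b}$, i.e. $(\omega_{ab}) = \Phi_{h(x)}\big( (\omega'_{ij}) \big)$ pointwise over $U$, where $\Phi_{h(x)}$ is the map in the statement. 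In matrix notation $\Phi_{h(x)}(\omega) = \rho(h(x))^T\, \omega\, \rho(h(x))$, which visibly preserves antisymmetry and is invertible with inverse $\Phi_{h(x)^{-1}}$; hence $\Phi_{h(x)}$ is a linear automorphism of $\so(D)$.

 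Now $\omega$ is a section of $W(\Q)$ iff $(\omega_{ij}) \in \g$, and a section of $W(\Q')$ iff $(\omega'_{ij}) \in \g$, i.e. iff $\Phi_{h(x)}^{-1}\big( (\omega_{ij}) \big) \in \g$, i.e. iff $(\omega_{ij}) \in \Phi_{h(x)}(\g)$. Thus the fibres satisfy $W(\Q)_x = W(\Q')_x$ (as subspaces of $\Lambda^2 T_x^*M$) if and only if $\g = \Phi_{h(x)}(\g)$ as subspaces of $\so(D)$. Since $\Phi_{h(x)}$ is a linear isomorphism of $\so(D)$, the inclusion $\Phi_{h(x)}(\g) \subseteq \g$ already forces equality; hence $W(\Q) = W(\Q')$ as bundles if and only if $\Phi_{h(x)}$ preserves $\g$ for every $x \in M$.

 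Finally, the instanton condition $F_A \in \Gamma\big( W(\Q) \otimes Ad(\B) \big)$ for a connection $A$ on an auxiliary bundle $\B$ depends on $\Q$ only through $W(\Q)$. Therefore, when $W(\Q) = W(\Q')$, a connection is an instanton for $\Q$ if and only if it is an instanton for $\Q'$, and so the corresponding instanton moduli spaces coincide.
\end{proof}
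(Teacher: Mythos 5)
Your proof is correct and follows essentially the same route as the paper, whose ``proof'' is the local-frame computation in the paragraph immediately preceding the proposition (the component relation $\omega_{ab} = \rho(h)\indices{^i_a}\,\omega'_{ij}\,\rho(h)\indices{^j_b}$ from $e' = R_h \circ e$, from which the statement is deduced). You merely make explicit two points the paper leaves implicit --- that $\Phi_{h(x)}$ is an automorphism of $\so(D)$, so that $\Phi_{h(x)}(\g) \subseteq \g$ already forces equality, and that equality of instanton bundles gives equality of moduli spaces --- which is a faithful completion rather than a different argument.
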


\noindent
For example, this holds true for $h$ being proportional to the identity matrix, i.e.~for conformal rescalings.
Furthermore, if $h$ is $SO(D)$-valued, the above action coincides with the adjoint action of $SO(D)$ on $\so(D)$.
Due to the normal deformation property, $h$ additionally takes values in the normaliser of $\g$, and, thus, this adjoint action preserves $\g$ in $\so(D)$.
Hence, if we have $h \in C^\infty(M,N_{SO(D)}(G))$ the above proposition always applies.\medskip

Let us finish this section by a short remark on the intrinsic torsion of metric $G$-structures.
Every metric $G$-structure $\Q$ is contained in a unique ambient orthonormal frame bundle. That is, we have inclusions
\begin{equation}
 \Q \subset SO(M,g) \subset F(M)
\end{equation}
for some metric $g$ on $M$.
The intrinsic torsion of $\Q$ is then defined as in \eqref{eq:inttor}, where we now choose $A_0$ to be the Levi-Civita connection of the metric it defines.
As pointed out in Section~\ref{sect:ndefs}, if $h$ is not $SO(D)$-valued, $\Q$ and $\Q'$ will define different metrics.
Thus, in this case, $\Q'$ will be contained in another $SO(M,g')$ with a different Riemannian metric $g'$, whence a different Levi-Civita connection is to be used in the computation of the intrinsic torsion of $\Q'$.

If, however, $h$ takes values in $SO(D)$ exclusively, we have $A_0 = A_0'$.
For normal deformations of torsion-free metric $G$-structures by $SO(D)$-valued functions $h$, the intrinsic torsion of the deformed structure is, therefore, given by $\zeta_h$ according to the second part of Proposition~\ref{st:inttor}.

\section{Examples}

\subsection{Conformal rescalings}

These deformations are induced by $h = \phi\, \mathbbm{1}_D$, where $\phi \in C^\infty(M,\mathbb{R}_+)$ is a positive function on $M$.
For Theorem~\ref{st:general_isomp_of_connections} to apply we have to ensure that the splitting $\gl(D,\mathbb{R}) = \g \oplus \widetilde{\m}$ is invariant under $G$.
Note that the splitting $\gl(D,\mathbb{R}) = \so(D) \oplus \mathrm{sym}$ of $\gl(D,\mathbb{R})$ into antisymmetric and symmetric matrices is invariant under the adjoint action of $SO(D)$ and that $\widetilde{\m} = \m \oplus\mathrm{sym}$, for a splitting $\so(D) = \g \oplus \m$.
Therefore, if $G \subset SO(D)$, the splitting $\gl(D,\mathbb{R}) = \g \oplus \widetilde{\m}$ is invariant under $G$ if and only if there is a $G$-invariant splitting $\mathfrak{so}(D) = \g \oplus \m$.

If $G \subset SO(D)$, we have $pr_\g \circ h^*\mu_H = 0$ in addition.
This is because the matrix part of $h^*\mu_H = \phi^{-1}\, \d \phi \otimes \mathbbm{1}_D$ is symmetric, whereas $\g \subset \so(D)$ contains antisymmetric matrices only.
Thus, the two isomorphisms between $\C(\Q)$ and $\C(\Q')$ constructed in Theorem~\ref{st:general_isomp_of_connections} and Lemma~\ref{st:central_isomorphism_of_connections} coincide in this situation.

Explicitly, let $\Gamma$ be a connection on $\Q$, i.e.~the covariant derivative it induces preserves the $G$-structure.
Assume that it has the local representation
\begin{equation}
 e^*\Gamma = (\Gamma\indices{^i_j}) \in \Omega^1(U,\g)
\end{equation}
with respect to a local section $e \in \Gamma(U,\Q)$.
We are then given a local section $e' = (\phi\, e_1,\ldots,\phi\, e_D)$ of $\Q'$.
Theorem~\ref{st:general_isomp_of_connections} and Lemma~\ref{st:central_isomorphism_of_connections} imply that there exists a connection $\Gamma'$ on $\Q'$, i.e.~preserving $\Q'$, having local representations
\begin{equation}
 e'^*\Gamma' = ({\Gamma'}\indices{^i_j}) = (\Gamma\indices{^i_j}) \in \Omega^1(U,\g)
\end{equation}
with respect to $e'$.
Note that, although the local representations of $\Gamma$ and $\Gamma'$ coincide, these are representations with respect to different local sections and, therefore, represent different connections on $F(M)$ in general.
Alternatively, expressing both connections with respect to $e'$,
\begin{equation}
\big( e'^*{\Gamma'} \big)\indices{^j_k} = \big( e'^*{\Gamma} + (h^{-1})^*\mu_{GL} \big)\indices{^j_k}
 = \big( e'^*{\Gamma} \big)\indices{^j_k} - \underbrace{\d \log(\phi)\, \delta\indices{^j_k}}_{= \zeta_h},
\end{equation}
as local Lie-algebra-valued $1$-forms.
If $\nabla$ and $\nabla'$ are the covariant derivatives on $TM$ induced by $\Gamma$ and $\Gamma'$, respectively, one can check that
\begin{align}
 (\nabla'_{e'_i}\, g')(e'_j,e'_k) ={}& - g'(\nabla'_{e'_i}\, e'_j, e'_k) - g'(e'_j, \nabla'_{e'_i}\, e'_k) \notag\\
  ={}& - \phi ( \Gamma\indices{_i^l_j}\, \delta_{lk} + \Gamma\indices{_i^l_k}\, \delta_{lj} ) \notag\\
  ={}& \phi\, (\nabla_{e_i}\, g)(e_j,e_k)\\
  ={}& 0. \notag
\end{align}
In the second last identity we made use of the fact that the local connection forms of $\nabla$ and $\nabla'$ coincide in the respective bases.
In a similar manner all defining sections of $\Q$ are preserved by $\Gamma$ if and only if the defining sections of $\Q'$ are preserved by $\Gamma'$.
Moreover, Proposition~\ref{st:W(Q)=W(Q')} applies to this setting whence $\Gamma$ is an instanton for $\Q$ if and only if $\Gamma'$ is an instanton for $\Q'$.

\subsection{Constant deformations}

Another very simple example is the case where $h$ is a constant map.
Here, $h$ is not necessarily central for $G$, and we have $h^*\mu_H = 0$ whence the the inhomogeneous term in $f_{\Q,h}$ vanishes identically.
The condition of Proposition~\ref{st:W(Q)=W(Q')} is a non-trivial restriction on $h$.
However, if it is satisfied, $f_{\Q,h}$ maps instantons for $\Q$ to instantons for $\Q'$.
The local representations of $\Gamma$ and $\Gamma'$ are related as
\begin{equation}
\big( e'^*{\Gamma'} \big)\indices{^j_k} = \rho(h^{-1})\indices{^j_m}\, \big( e^*{\Gamma} \big)\indices{^m_n}\, \rho(h)\indices{^n_k}.
\end{equation}
Note that $\Gamma'$ is compatible with the $G$-structure $\Q'$.
Such a deformation occurred in~\cite{Bunk:2014coa}, where it led to constructions of half-flat $SU(3)$-structures on cylinders over $5$-dimensional Sasaki-Einstein $SU(2)$-structure manifolds.

\subsection{Central deformation of Sasaki-Einstein SU(2)}

In~\cite{Bunk:2014coa}, a point-dependent deformation of the lift of a Sasaki-Einstein $SU(2)$-structure to the sine-cone over $M^5$ has been introduced.
The generating $h$ was shown to be central for $SU(2)$ as well as orthonormal.
Thus, Proposition~\ref{st:W(Q)=W(Q')} applies, and the transformations map the instantons of $\Q$ to those of $\Q'$ and vice-versa.
The deformation proved to reproduce the nearly Kähler $SU(3)$-structure on the sine-cone which was originally constructed in~\cite{Fernandezea} using geometric flow equations.

\section{Conclusions and outlook}

In this paper, we introduced and investigated an action of $C^\infty(M,N_H(G))$ on the space $G\P$ of principal $G$-subbundles of an ambient principal $H$-bundle $\P$.
Under slightly stronger restrictions on $H$ and $G$, we were able to construct bijections between the connections on principal $G$-subbundles of $\P$ related by this action, and to deduce a formula for the intrinsic torsion of the resulting subbundles.
However, it certainly would be interesting to specify the stabiliser subgroup of a certain principal $G$-subbundle of $\P$ in $C^\infty(M,N_H(G))$.

We then specialised our considerations to $G$-structures and instantons, and found a condition on the deformations such that $G$-structures they relate define the same instantons.
Generalising this observation, one might ask for the full classification of $G$-structures that define the same instanton bundle.

Since the families of $G$-structures, or principal subbundles, constructed this way are controlled to some extent by the features of the original $G$-structure, it is tempting to ask how other important features of these structures change under normal deformations.
Proposition~\ref{st:inttor} is a partial result regarding the intrinsic torsion.
For a fully general statement on metric $G$-structures, however, one would have to compute how the Levi-Civita connection of the ambient orthonormal frame bundle of a metric $G$-structure changes under the action of $h$.
Furthermore, for applications it would be necessary to translate the statement made here to intrinsic torsion classes (in the sense of~\cite{Chiossi:2002tw,GrayHervella}).
In particular, if one could find a general formula for the changes in the torsion classes induced by a normal deformation, one could use it to find normal deformations which produce particularly interesting geometries explicitly.
Another, perhaps less accessible, goal would of course be to investigate the change of the instanton moduli spaces under more general deformations of $G$-structures than considered in Proposition~\ref{st:W(Q)=W(Q')}.

The method of flow equations, as introduced in~\cite{Hitchin:2001}, is another way of constructing new $G$-structures from given ones.
In one dimension higher it yields $G'$-structures of well-known torsion type; the torsion of the resulting structure can be controlled by the design of the flow equations.
However, it is in general unclear how the torsion of the $G$-structure evolves under the flow, in contrast to the results obtained here for normal deformations.
This might be evidence of the richer nature of flow equations, which presumably encompass a larger variety of curves in the $G$-structure moduli space of $M$.
Nevertheless, some recent progress in the investigation of the evolution of torsion classes under flow equations has been made in~\cite{delaOssa:2014l}.

The difficulty with flow equations is the construction of solutions.
Here there might be a promising application of normal deformations.
A solution to a flow equation is a family $\Q_t$ of $G$-structures.
In order to find solutions to a flow equation, one might consider ansätze of the form $\Q_t = \R_{h_t} \Q$ for a family $h_t \in C^\infty(M,N_{GL(D)}(G))$.
That is, families of normal deformations of a $G$-structure on $M$ could yield interesting $G'$-structures on warped products $M \times I$, where $I$ is an interval.
Here it would again be useful to have an expression for the change of torsion classes for particular $G$-structures under normal deformations, as one could then more easily try to apply normal deformations to flow equations formulated in terms of torsion classes as for instance in~\cite{delaOssa:2014l}.
Examples where normal deformations have been employed successfully in the construction of $G'$-structures on $M \times I$ can be found in~\cite{Bunk:2014coa}.

In conclusion, normal deformations of $G$-structures provide a tool for constructing new geometries which is easier to handle than flow equations at the cost of being less general.
Nevertheless, they can be applied in the construction of new explicit $G$-structures which may well be of interest in geometry as well as string theory.

\section*{Acknowledgements}

The author would like to thank Olaf Lechtenfeld, Alexander Popov, Marcus Sperling and Lutz Habermann for helpful discussions and support.

\bibliographystyle{amsplainurl}
\bibliography{NDefsBib}

\end{document}